\documentclass[12pt]{amsart}
\calclayout
\usepackage[alphabetic]{amsrefs}
\usepackage{amssymb}
\usepackage{mathtools}
\usepackage{tikz-cd}
\usepackage{slashed}
\usepackage[all,cmtip]{xy}

\usepackage{hyperref}

\newcommand{\R}{\mathbb{R}}

\DeclareMathOperator{\Coeff}{Coeff}
\DeclareMathOperator{\Hom}{Hom}

\DeclareMathOperator{\ind}{ind}

\numberwithin{equation}{section}

\theoremstyle{plain}
\newtheorem{thm}[equation]{Theorem}
\newtheorem{cor}[equation]{Corollary}
\newtheorem{lem}[equation]{Lemma}

\theoremstyle{definition}
\newtheorem{defn}[equation]{Definition}

\theoremstyle{remark}
\newtheorem{rem}[equation]{Remark}

\begin{document}

\author{Honghao Gao}
\address{Honghao Gao, Yau Mathematical Sciences Center, Tsinghua University \& Beijing Institute of Mathematical Sciences and Applications} 
\email{gaohonghao@tsinghua.edu.cn}

\author{Hanming Liu}
\address{Hanming Liu, Department of Mathematics, University of Oregon}
\email{hanming@uoregon.edu}

\title{Isomorphism in the augmentation category}

\maketitle

\begin{abstract}
    Given a Legendrian submanifold in any dimension, we prove that two augmentations are isomorphic within the positive augmentation category exactly when they differ by a combination of a dga homotopy and a dilation. This extends the corresponding statement for Legendrian knots and links, but instead of relying on the dga for consistent copies, we make use of quantum flow tree techniques. Consequently, we can strengthen and clarify a result of the first author as follows: for knot contact homology, the augmentation category is not in general equivalent to the microlocal rank 1 sheaf category.
\end{abstract}

\section{Introduction}

The augmentation category is a powerful algebraic structure associated with a Legendrian submanifold in a contact manifold. It is a categorical lift of the classical theory of augmentations of the Chekanov-Eliashberg differential graded algebra (CE dga). The morphisms of the category encapsulate the bilinearized contact homology between augmentations.

The augmentation category for a Legendrian submanifold $\Lambda$ inside a contact manifold $V$ comes in two fundamental flavors: the positive augmentation category \(Aug_+(V,\Lambda)\) and the negative augmentation category \(Aug_-(V,\Lambda)\). The negative augmentation category can be derived directly from the CE dga, and it is a non-unital \(A_\infty\) category. The positive augmentation category  is a unital category, but the definition involves perturbations of multiple copies of the Legendrian. For Legendrian knots and links in \(\mathbb{R}^3\), the perturbation scheme is explicitly constructed in \cite{NRSSZ}. In higher dimensions, the definition of \(Aug_+(V,\Lambda)\) is sketched in Section 3.3 of \cite{Cha} using localization. Over a field of characteristic 2, \(\mathcal{A}ug_+(V,\Lambda)\) is independently constructed and made rigorous in the recent work \cite{Liu25}, also using localization.

Augmentations considered as objects in the unital positive augmentation category acquire a natural isomorphism in the categorical sense. The general philosophy is that the isomorphism in the augmentation is a combination of dga homotopy and dilation. The dga homotopy is the usual definition when we consider the augmentation as a dga morphism to the trivial ground dg algebra. A dilation is a tuple of invertible elements in the field, used to dilate the augmented values of Reeb chords whose endpoints are  on different link components. 

In \cite{NRSSZ}, it is proven that the categorical isomorphism between augmentations is the dga homotopy for Legendrian knots, in which case there is no dilation. In \cite{CSL+}, this general philosophy is verified for Legendrian links. Both cases are proven using the explicit formula for the $n$-copy dga. In this paper, we verify the philosophy in arbitrary dimensions. In higher dimensions, an explicit perturbative method for building a consistent $n$-copy dga is not available. Instead, we use the technique of quantum flow trees.

\medskip
Let \(\mathcal{A}\) be a semi-free dga equipped with a generating set \(\mathcal{S}\) and a \(P\)-link grading (where \(P\) is a finite set). For the convention of link grading, see \cite{Liu25}*{Definition 2.15}. Let \(k\) be a ground field. An augmentation of \(\mathcal{A}\) is a dg algebra morphism \(\epsilon:\mathcal{A}\rightarrow k\) preserving units, where \(k\) is the trivial ground dg algebra with the field \(k\) sitting at degree 0 and differentials being 0.

\begin{defn}\label{augmentation homotopy}
    Let \(\epsilon_1,\epsilon_2:\mathcal{A}\to k\) be two augmentations. A map \(K:\mathcal{A}\to k\) is said to be a \emph{dilated augmentation homotopy} if:
    \begin{enumerate}
        \item \(K\) is \(k\)-linear and has degree 1.
        \item For each \(i\in P\), there exists some \(d_i\in k^\times\) such that for each \(x\in\mathcal{S}^{ij}\), \[d_i\epsilon_1(x)-d_j\epsilon_2(x)=K(\partial x).\]
        \item \(K(xy)=K(x)\epsilon_2(y)+\epsilon_1(x)K(y)\).
    \end{enumerate}
If $d_i=1$ for every $i$, this coincides with the usual definition of dga homotopy.
\end{defn}

Suppose \(\mathcal{A}\) is the Chekanov-Eliashberg dga of a Legendrian \(\Lambda\) with a vanishing Maslov class in \(V=M\times\R\), the contactization of an exact symplectic manifold. Suppose \(\Lambda = \Lambda_1 \sqcup \dotsb \sqcup \Lambda_n\) has $n$-components, define \(\bar{\pi}(\Lambda) = \ast_{i=1}^n (\pi_1(\Lambda_i))\) to be the free product of the fundamental group of each component. The generating set \(\mathcal{S}\) consists of Reeb chords of $\Lambda$ and generators of \(\bar{\pi}(\Lambda)\). We also equip \(\mathcal{A}\) with the link grading provided by the path components of \(\Lambda\). For simplicity, we assume \(k\) has characteristic \(2\). 

\begin{thm}\label{mainthm}
    Let \(\epsilon_1,\epsilon_2:\mathcal{A}\to k\) be two augmentations. Then, \(\epsilon_1\) is quasi-isomorphic to \(\epsilon_2\) in \(\mathcal{A}ug_+(V,\Lambda)\) if and only if there is a dilated augmentation homotopy \(K\) between \(\epsilon_1\) and \(\epsilon_2\).
\end{thm}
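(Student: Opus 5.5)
The argument follows the strategy of \cite{NRSSZ} and \cite{CSL+}: identify the hom complexes of \(\mathcal{A}ug_+\) in low degree and compute the products \(m_2\) with the unit that decide invertibility. The difference is that the needed pieces of the \(2\)- and \(3\)-copy differentials will come from quantum flow trees rather than an explicit \(n\)-copy dga. Fix a Morse function \(f\) on \(\Lambda\) with a unique minimum \(x_i^0\) on each component \(\Lambda_i\), used to perturb the copies. Then \(\hom_+(\epsilon_1,\epsilon_2)\) is spanned by Reeb chord generators \(c^\vee\) together with Morse critical point generators \(x^\vee\). The unit of \(\epsilon\) is \(e_\epsilon=\sum_i (x_i^0)^\vee\), as in \cite{Liu25}. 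In degree \(0\) a cycle has the form
\[
\alpha=\sum_i a_i (x_i^0)^\vee+\sum_{|c|=0} b_c\, c^\vee .
\]

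The first step is to compute \(m_1\) on such elements using quantum flow trees. Near the zero section, a rigid disk of the \(2\)-copy with one Morse puncture at \(x_i^0\) degenerates to a flow tree: a disk of \(\Lambda\) with a gradient line attached. The contribution of \(x_i^0\) to the coefficient of \(c^\vee\) is therefore the term \(a_i\epsilon_1(c)-a_j\epsilon_2(c)\) for \(c\in\mathcal{S}^{ij}\), which is nonzero only if \(c\) touches \(\Lambda_i\) or \(\Lambda_j\). Similarly, \(m_1\) on degree \(-1\) chord duals is dual to \(\partial\), twisted by \(\epsilon_1\) and \(\epsilon_2\).

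Hence \(m_1\alpha=0\) is equivalent to the following statement. The function \(K\) with \(K(c)=b_c\) on degree \(0\) chords, extended to \(\mathcal{A}\) by the \((\epsilon_1,\epsilon_2)\)-derivation rule (3), satisfies
\[
a_i\epsilon_1(c)-a_j\epsilon_2(c)=K(\partial c)
\]
for degree \(1\) chords. For chords of degree other than \(1\) this equation is automatic by degree reasons. The generators of \(\bar\pi(\Lambda)\) carry group-ring coefficients; I would handle them by placing the base point near \(x_i^0\), so that the equation with the \(d_i\) also holds for them. This matches condition (2) with \(d_i=a_i\), except that it does not yet require \(a_i\neq0\).

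The second step is to show that invertibility of \([\alpha]\) forces \(a_i\neq0\), and conversely. For this I would compute \(m_2(\alpha,\beta)\) projected onto the Morse minima. The quantum flow tree count should show that the coefficient of \((x_i^0)^\vee\) in \(m_2(\alpha,\beta)\) equals \(a_ib_i\) plus terms involving only chord generators. Those chord terms cannot contribute, because they come from disks whose output at a minimum would have to be a thin Morse flow, and such trees have only Morse inputs. In other words, there is a homomorphism
\[
H^0\hom_+(\epsilon,\epsilon')\to \prod_i k
\]
compatible with composition. It sends units to \((1,\dots,1)\) and is well defined on cohomology, because \(m_1\) of degree \(-1\) elements has no minima component. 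Consequently, if \(\alpha\) has an inverse \(\beta\), then \(a_ib_i=1\), so every \(a_i\) is nonzero and \(K\) is a dilated augmentation homotopy.

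Conversely, given \(K\) with dilations \(d_i\), set \(\alpha=\sum d_i(x_i^0)^\vee+\sum K(c)c^\vee\). I expect this to be the main obstacle. I would show invertibility by a filtration argument: filter the hom complexes by chord action. With respect to this filtration, \(m_2(\alpha,-)\) is the diagonal multiplication by \(d_i\), which is invertible, plus strictly filtration-increasing terms. The associated graded map is therefore an isomorphism, and so \(m_2(\alpha,-)\colon H^*\hom_+(\epsilon_2,\epsilon_3)\to H^*\hom_+(\epsilon_1,\epsilon_3)\) is an isomorphism for every \(\epsilon_3\). In particular, taking \(\epsilon_3=\epsilon_1\), we can solve \(m_2(\alpha,\beta)=e_{\epsilon_1}\) in cohomology. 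Doing the same on the other side gives a left inverse, which yields a two-sided inverse, so \(\alpha\) is an isomorphism.

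The delicate point throughout is verifying via the flow tree degeneration that no disk with chord inputs outputs a Morse minimum, so that the leading term really is diagonal. I would argue this by energy: such an output would require nonpositive action.
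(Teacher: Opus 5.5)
Your outline has the same structure as the paper's proof. You use the same basis of $\Hom^0$ and the same flow-tree computation of $m_1$. You observe, as the paper does, that the minimum-coefficients of $m_2(\alpha,\beta)$ are the products $a_ib_i$. Your remark that this projection is well defined on cohomology, because exact degree-$0$ elements have no minimum components, tidies up the paper's chain-level phrasing. The converse rests, in both, on energy triangularity.

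\textbf{The gap: $\bar\pi(\Lambda)$.} For a generator $t\in\pi_1(\Lambda_i)$, condition (2) reads $d_i(\epsilon_1(t)-\epsilon_2(t))=K(\partial t)=0$. Since $d_i\neq0$, this says the two augmentations agree on $\bar\pi(\Lambda)$. That is a real constraint. Putting the base point at the minimum only fixes the bookkeeping; it does not make the equation hold. You only compute the chord components $c^\vee$ of $m_1\alpha$, so nothing in your argument produces this constraint.

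The missing input is the component of $m_1(\min_i)$ along the index-$1$ Morse generators $p^\vee$. The two rigid gradient lines from $p$ to $\min_i$ form a loop $\gamma_p$ and contribute $a_i(1+\epsilon_1(\gamma_p)\epsilon_2(\gamma_p)^{-1})p^\vee$. By energy, no $b_j^\vee$ reaches $p^\vee$. The loops $\gamma_p$ generate $\pi_1(\Lambda_i)$ because $f$ has a unique minimum on $\Lambda_i$. So a cocycle with $a_i\neq 0$ forces $\epsilon_1=\epsilon_2$ on $\pi_1(\Lambda_i)$; this is Lemma~\ref{lemma homotopy class}.

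The order of steps matters. Your $c^\vee$-coefficient $a_i\epsilon_1(c)+a_j\epsilon_2(c)$ is only valid once this agreement holds; otherwise the flow-line contributions carry extra factors $\epsilon_1(\gamma)\epsilon_2(\gamma)^{-1}$. That is why the paper first gets $a_i\neq0$ from invertibility, then agreement on $\bar\pi$, and only then applies Lemma~\ref{lemma cocycle homotopy}. In the converse you must likewise check that the $p^\vee$ components of $m_1\alpha$ vanish.

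\textbf{Grading.} Your grading is off by one. In the paper's conventions $\Hom^0$ is spanned by the minima and the $b_j^\vee$ with $|b_j|=-1$. The homotopy $K$ lives on degree $-1$ chords, and (2) is nontrivial exactly on degree-$0$ chords. Your ``$|c|=0$'' and ``degree $1$ chords'' make sense only if $|c|$ means the degree in $\Hom$, which conflicts with your own later ``degree $-1$ chord duals''.

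\textbf{The converse.} Your filtration argument is a legitimate and somewhat cleaner alternative to the paper's inductive solve for $B$. Since $m_1\alpha=0$, the map $m_2(\alpha,-)$ is a chain map. It is an invertible diagonal part plus a strictly action-increasing part, hence bijective. So the preimage of the unit is automatically a cocycle. This makes explicit what the paper's last step (``$A$ is invertible under $m_2$, so $m_1(B)=0$'') uses implicitly, namely injectivity of $m_2(A,-)$ on $\Hom^1$.

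The price is that you need the diagonal-plus-higher form of $m_2(\min_i,-)$ on degree-$1$ generators ($p^\vee$ and $e_l^\vee$), not just the degree-$0$ formulas \eqref{m2calc1}--\eqref{m2calc2}. That is the delicate input for this direction. The point you flag, that chord inputs never output a minimum, is what the forward direction needs.
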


\begin{defn}
    Suppose \(\Lambda\subset V\) is an \(n\)-component Legendrian submanifold. We say that two augmentations \(\epsilon,\epsilon_2:\mathcal{A}\to k\) are related by a \emph{dilation} if there exists an \(n\)-tuple \(\{d_1,\dotsb, d_n\} \in (k^*)^n\) such that for any mixed Reeb chord \(x_{i,j}\) from \(\Lambda_i\) to \(\Lambda_j\),
    \[
    \epsilon(x_{i,j}) = {d_j \over d_i} \epsilon_2(x_{i,j}).
    \]
\end{defn}

\begin{cor}\label{dga homotopy plus dilation}
    Let \(\epsilon,\epsilon_2:\mathcal{A}\to k\) be two augmentations. Then, \(\epsilon\) is quasi-isomorphic to \(\epsilon_2\) in \(\mathcal{A}ug_+(V,\Lambda)\) if and only if they are related by the composition of a dga homotopy and a dilation.
\end{cor}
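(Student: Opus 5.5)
The plan is to deduce the corollary from Theorem~\ref{mainthm} by splitting a dilated augmentation homotopy into a pure dilation followed by an ordinary dga homotopy. The bookkeeping is done generator by generator, using the link grading.

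For the forward direction, assume $\epsilon$ is quasi-isomorphic to $\epsilon_2$. Theorem~\ref{mainthm} gives a dilated augmentation homotopy $K$ with constants $d_i\in k^\times$. Define
\[
\epsilon'(x)=\frac{d_j}{d_i}\,\epsilon_2(x)\quad\text{for } x\in\mathcal{S}^{ij},
\]
and extend multiplicatively. Since the differential preserves the link grading, every word in $\partial x$ for $x\in\mathcal{S}^{ij}$ is a composable product of generators $x_1\cdots x_m$ running from $i$ to $j$. The dilation factors along such a word telescope to $d_j/d_i$. Hence $\epsilon'\circ\partial=0$, so $\epsilon'$ is an augmentation related to $\epsilon_2$ by a dilation.

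Pure generators have $i=j$, and their values are unchanged. This covers the $\pi_1$ generators and the pure chords, so $\epsilon'$ respects the group relations. Because $\epsilon$ is a unit-preserving map, we may work with the grading by components and treat the unit of each $i$ as the idempotent $e_i$.

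Next set $K'(x)=K(x)/d_i$ for $x\in\mathcal{S}^{ij}$, and extend $K'$ to words by the $(\epsilon,\epsilon')$-derivation rule
\[
K'(xy)=K'(x)\,\epsilon'(y)+\epsilon(x)\,K'(y).
\]
The main check is that $\epsilon(x)-\epsilon'(x)=K'(\partial x)$, which follows after dividing condition (2) of Definition~\ref{augmentation homotopy} by $d_i$. For this I will prove by induction on word length that, for a composable word $w$ from $i$ to $j$,
\[
K(w)=d_i\,K'(w)\quad\text{with } \epsilon_1=\epsilon.
\]
The inductive step needs the identities $d_i\,\epsilon'(y)=d_{\ell}\,\epsilon_2(y)$ for $y$ from $\ell$ to $j$, and $d_i\epsilon(x)$ expressed through $d_\ell$. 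The second of these is not automatic. It is cleaner to normalize on the other side, letting the dilation act on $\epsilon$ instead.

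So I will instead define the dilation so that the factors are absorbed consistently: the correct normalization is whichever makes the induction telescope. Verifying this compatibility of the derivation rule with the dilation factors is the main obstacle. I expect it to reduce to the identity
\[
d_i\,\epsilon_1(x)=d_\ell\,\epsilon_2(x)+K(\partial x)
\]
applied letterwise, together with choosing $K'(w)=d_i^{-1}K(w)\cdot(\text{appropriate }d\text{-ratio})$. The result is that $\epsilon$ and $\epsilon'$ are dga homotopic through $K'$.

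For the converse, the argument runs the same computation in reverse. Suppose $\epsilon$ is homotopic to $\epsilon'$ via an ordinary homotopy $H$, and $\epsilon'$ is a dilation of $\epsilon_2$ by $(d_i)$. Rescaling $H$ on $\mathcal{S}^{ij}$ by $d_i$ and re-extending by the derivation rule produces a dilated augmentation homotopy between $\epsilon$ and $\epsilon_2$. Theorem~\ref{mainthm} then gives the quasi-isomorphism. If the order of composition in the statement is the reverse (a dilation first, then a homotopy), the same rescaling applies, since dilations act by automorphisms compatible with $\partial$.
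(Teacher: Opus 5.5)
Your strategy is viable: use Theorem~\ref{mainthm} to get a dilated augmentation homotopy, then split it algebraically into a dilation and an ordinary homotopy. But the proposal stops at its only nontrivial step. You never fix $K'$, and you never check that the $(\epsilon,\epsilon')$-derivation it generates satisfies $\epsilon(x)-\epsilon'(x)=K'(\partial x)$. Instead you call your first choice ``not automatic'' and defer to ``whichever makes the induction telescope''. That is a genuine gap, since this verification is the whole content of the forward direction.

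Whether the dilation factors telescope through rule (3) depends on the direction in which the words of $\partial x$ compose relative to the order $\epsilon_1(\cdot)\,K(\cdot)\,\epsilon_2(\cdot)$. That direction is not yours to choose. In the standard link grading used in \eqref{m1 of alphai mini}--\eqref{coeff of el in m1A}:
\begin{itemize}
\item a chord $x$ from $\Lambda_{c(x)}$ to $\Lambda_{r(x)}$ has $\partial x$ a sum of words $y_1\cdots y_m$ with $r(y_1)=r(x)$, $c(y_l)=r(y_{l+1})$ and $c(y_m)=c(x)$;
\item $\epsilon_1$ augments the left factors, so in $xb_jy$ it is $x$ that gets $\epsilon_1$;
\item condition (2) reads $d_{c(x)}\epsilon_1(x)-d_{r(x)}\epsilon_2(x)=K(\partial x)$.
\end{itemize}
Now take $\epsilon'(y)=\frac{d_{r(y)}}{d_{c(y)}}\epsilon_2(y)$ and your first guess $K'(y)=K(y)/d_{c(y)}$. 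Telescoping gives $\epsilon'(y_{l+1}\cdots y_m)=\frac{d_{c(y_l)}}{d_{c(x)}}\epsilon_2(y_{l+1}\cdots y_m)$. So each Leibniz term $\epsilon(y_1\cdots y_{l-1})K'(y_l)\epsilon'(y_{l+1}\cdots y_m)$ equals the corresponding term of $K(y_1\cdots y_m)$ divided by $d_{c(x)}$. Hence $K'(\partial x)=d_{c(x)}^{-1}K(\partial x)=\epsilon(x)-\epsilon'(x)$, which is the missing verification. If the words composed the other way, the factor needed on $K(y_l)$ would depend on both endpoints of $x$. That is the obstruction you ran into; it comes from the wrong convention, not from the problem.

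Your converse relies on the same unverified rescaling. For the other order of composition, with the dilation acting on the left augmentation, the rescaling is not ``the same'': you must use $d_{r(y)}$ instead of $d_{c(y)}$. In fact no rescaling is needed. A dga homotopy is a dilated augmentation homotopy with all $d_i=1$, and a dilation is one with $K=0$. So Theorem~\ref{mainthm} makes each an isomorphism in $H^0\mathcal{A}ug_+(V,\Lambda)$, and isomorphism is transitive.

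The paper handles the forward direction categorically as well. It composes the isomorphism $\sum\alpha_i\min_i+\sum K_jb_j^\vee$ with the dilation isomorphism $\sum\alpha_i^{-1}\min_i$. By \eqref{m2calc1}--\eqref{m2calc2} the composite has all $\min$-coefficients equal to $1$, and the $A_\infty$ relations make it a cocycle. Lemma~\ref{lemma cocycle homotopy} then reads it off as an ordinary dga homotopy. That route needs the $m_2$ computations but no convention-tracking. Yours, once the telescoping identity above is supplied, is purely algebraic and gives explicit formulas for $\epsilon'$ and $K'$.
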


Corollary \ref{dga homotopy plus dilation} is proved at the end of section \ref{proof of the main theorem}.

\begin{rem}
    In Corollary \ref{dga homotopy plus dilation}, it suffices to consider the composition of one dga homotopy and one dilation, or the composition of one dilation and one dga homotopy. In particular, we do not need to consider a zig-zag of them.
\end{rem}

The following statements are special cases of the above corollary.
\begin{cor}\label{specialcasecor1}
    Suppose \(\Lambda\subset V\) is a connected Legendrian submanifold. Then two augmentations \(\epsilon,\epsilon_2:\mathcal{A}\to k\) are isomorphic in the positive augmentation category if and only if they are dga homotopic.
\end{cor}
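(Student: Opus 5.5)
This should be a direct specialization of Corollary \ref{dga homotopy plus dilation}, or equivalently of Theorem \ref{mainthm}, to the case $n=1$. First, since $\Lambda$ is connected, the index set $P$ of the link grading is a single point. Every generator in $\mathcal{S}$, whether a Reeb chord or a generator of $\bar{\pi}(\Lambda)=\pi_1(\Lambda)$, then lies in $\mathcal{S}^{11}$. In particular there are no mixed Reeb chords.

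Second, I will show that a dilation is trivial in this setting. The definition of dilation constrains only mixed Reeb chords $x_{i,j}$ with $i\neq j$, so for $n=1$ the condition is vacuous. Any $1$-tuple $(d_1)\in k^*$ therefore relates $\epsilon$ to $\epsilon_2$ only when $\epsilon=\epsilon_2$. I should be careful that the dilation is not meant to act on pure chords: for pure chords from $\Lambda_i$ to $\Lambda_i$ the factor is $d_i/d_i=1$, so it acts as the identity even if one reads the definition as covering them. Consequently, "a dga homotopy composed with a dilation" reduces to "a dga homotopy". Corollary \ref{dga homotopy plus dilation} then gives the claim in both directions.

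As a cross-check, I can argue straight from Theorem \ref{mainthm}. Given a dilated augmentation homotopy $K$ with constant $d_1\in k^\times$, condition (2) reads $d_1(\epsilon_1(x)-\epsilon_2(x))=K(\partial x)$ for all $x\in\mathcal{S}$. Set $K'=d_1^{-1}K$. Then $K'$ is $k$-linear of degree $1$ and satisfies $\epsilon_1(x)-\epsilon_2(x)=K'(\partial x)$ on generators. Condition (3), the $(\epsilon_1,\epsilon_2)$-derivation property, is linear in $K$, so it is preserved under scaling. Hence $K'$ is an ordinary dga homotopy. Conversely, a dga homotopy is a dilated augmentation homotopy with $d_1=1$.

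The only point needing care is this rescaling step. Condition (2) is imposed on generators, so I must make sure it propagates correctly to the identity $\epsilon_1-\epsilon_2=K'\circ\partial$ on all of $\mathcal{A}$. This follows because both sides are $(\epsilon_1,\epsilon_2)$-derivations in the sense of (3), and $\partial$ is a derivation of degree $-1$. Characteristic $2$ removes any sign issues here.
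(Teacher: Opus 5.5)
Your proposal is correct and follows the paper, which records this statement as the $n=1$ case of Corollary \ref{dga homotopy plus dilation}: with no mixed chords, every dilation is trivial. Your rescaling $K'=d_1^{-1}K$ via Theorem \ref{mainthm} is a sound cross-check, and arguably the cleaner argument. The dilation definition, read literally, constrains only mixed chords, so ``the condition is vacuous'' alone would relate any two augmentations; you need the intended reading, which you do supply, that a dilation fixes pure generators in order to get $\epsilon=\epsilon_2$.
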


\begin{cor}\label{specialcasecor2}
    Suppose \(\Lambda\subset V\) is an \(n\)-component Legendrian submanifold whose dga \(\mathcal{A}\) has no Reeb chord of degree \(-1\). Then two augmentations \(\epsilon,\epsilon_2:\mathcal{A}\to k\) are isomorphic in the positive augmentation category if and only if they are related by a dilation.
\end{cor}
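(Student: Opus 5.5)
The plan is to deduce this directly from Theorem \ref{mainthm}, rather than from Corollary \ref{dga homotopy plus dilation}. The point is that, under the hypothesis, every dilated augmentation homotopy is forced to vanish identically. By Theorem \ref{mainthm}, $\epsilon$ and $\epsilon_2$ are isomorphic in $\mathcal{A}ug_+(V,\Lambda)$ if and only if there is a dilated augmentation homotopy $K$ between them. So it suffices to show that, when $\mathcal{A}$ has no degree $-1$ Reeb chords, the dilated augmentation homotopies are exactly the maps $K=0$ together with tuples $(d_i)$ satisfying $d_i\epsilon(x)=d_j\epsilon_2(x)$ for all $x\in\mathcal{S}^{ij}$. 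That condition is precisely a dilation, after replacing each $d_i$ by $d_i^{-1}$ to match the convention $\epsilon(x_{i,j})=\frac{d_j}{d_i}\epsilon_2(x_{i,j})$.

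\textbf{Step 1: $K$ vanishes on generators.} Since $K$ has degree $1$ and the target $k$ is concentrated in degree $0$, $K$ can be nonzero only on homogeneous elements of degree $-1$. The generating set $\mathcal{S}$ consists of Reeb chords and generators of $\bar{\pi}(\Lambda)$. The group generators and their inverses sit in degree $0$, and by hypothesis no Reeb chord has degree $-1$. Hence $K(s)=0$ for every $s\in\mathcal{S}$. Also $K(1)=0$, for degree reasons or from $K(1)=K(1\cdot 1)=2K(1)$.

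\textbf{Step 2: $K=0$ on all of $\mathcal{A}$.} Apply the derivation rule (3) of Definition \ref{augmentation homotopy} inductively on word length. For a word $s_1\cdots s_m$ in the generators,
\[
K(s_1\cdots s_m)=\sum_{\ell=1}^m \epsilon(s_1\cdots s_{\ell-1})\,K(s_\ell)\,\epsilon_2(s_{\ell+1}\cdots s_m),
\]
and by Step 1 every term vanishes. This holds even for words of total degree $-1$ such as a product of a degree $-2$ chord and a degree $1$ chord. By $k$-linearity, $K\equiv 0$.

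\textbf{Step 3: translate condition (2).} With $K=0$, condition (2) reads $d_i\epsilon(x)=d_j\epsilon_2(x)$ for each $x\in\mathcal{S}^{ij}$. For mixed chords this is the dilation relation with the rescaled tuple $(d_i^{-1})$. For pure generators ($i=j$) it says $\epsilon(x)=\epsilon_2(x)$, which is the implicit content of ``related by a dilation,'' namely that the dilation acts trivially on pure generators.

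Conversely, given a dilation, set $K=0$ and take the inverted tuple. Conditions (1) and (3) hold trivially, and (2) holds by the previous paragraph. Theorem \ref{mainthm} then gives the isomorphism.

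The only delicate point is bookkeeping rather than substance. First, Step 2 must genuinely use the derivation rule and not just a degree count on words, since words of degree $-1$ can exist without degree $-1$ generators. Second, the normalization of the tuple $d_i$ in Definition \ref{augmentation homotopy} must be matched against the definition of dilation, including its treatment of pure generators.
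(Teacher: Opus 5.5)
Your proof is correct and is essentially the paper's argument. The paper presents this corollary as a special case of Corollary~\ref{dga homotopy plus dilation}, and the only input there is the one you isolate: without degree $-1$ chords, the Leibniz rule forces $K\equiv 0$, so a dga homotopy reduces to equality and only the dilation remains. Going directly through Theorem~\ref{mainthm}, as you do, simply skips the composition step in the proof of that corollary. Your remarks on pure generators and on inverting the tuple $(d_i)$ are harmless bookkeeping, because $d\mapsto d^{-1}$ is a bijection of $(k^\times)^n$ and so either indexing convention for $\mathcal{S}^{ij}$ yields the same existence statement.
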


Knot contact homology \cite{Ng05a, Ng05b, Ng08, EENS} is a context in which the setup of Corollary \ref{specialcasecor2} applies. For a link with more than one component, the corollary yields that the only equivalence between augmentations is dilation. In \cite{Gao}, it is proven that the space of augmentations modulo dilations is mapped injectively, but not always surjectively, into the space of microlocal rank 1 sheaves modulo local systems. Over a field of characteristic 2, Corollary \ref{specialcasecor2} allows us to reinterpret the not necessarily surjective nature of the map defined in \cite{Gao} as:

\begin{cor}
    Over a field \(k\) of characteristic 2, for the conormal of a link in \(\R^3\), the category of microlocal rank 1 sheaves sometimes has more isomorphism classes of objects than the positive augmentation category.
\end{cor}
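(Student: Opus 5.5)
The plan is to combine Corollary \ref{specialcasecor2} with the comparison map constructed in \cite{Gao}. First I will check that knot contact homology satisfies the hypotheses of Theorem \ref{mainthm} and Corollary \ref{specialcasecor2}. Let $L=L_1\sqcup\dotsb\sqcup L_n\subset\R^3$ be a link. Its unit conormal $\Lambda_L\subset ST^*\R^3\cong J^1(S^2)=T^*S^2\times\R$ is a Legendrian in the contactization of an exact symplectic manifold, and it has $n$ components, each a torus. Its Maslov class vanishes. For a generic (e.g.\ braid-type) representative, the Reeb chords of $\Lambda_L$ have degrees $0,1,2$ \cite{Ng05a, Ng05b, Ng08, EENS}, so there is no chord of degree $-1$. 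Corollary \ref{specialcasecor2} then applies: over a field $k$ of characteristic $2$, the isomorphism classes of objects of $\mathcal{A}ug_+(ST^*\R^3,\Lambda_L)$ are exactly the augmentations modulo dilations.

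Next I will bring in the result of \cite{Gao}. There, augmentations of the knot contact homology dga modulo dilations are mapped injectively into the set of microlocal rank $1$ sheaves on $\R^3$ with singular support on $\Lambda_L$, modulo twisting by local systems. I need to confirm that this construction and its injectivity hold over an arbitrary field, in particular over characteristic $2$. I also need to check that the equivalence relation on the sheaf side is no finer than isomorphism in the sheaf category, so that sheaf isomorphism classes surject onto the target of Gao's map. Given this, composing with the identification from the first step yields a map
\[
\{\text{iso classes in } \mathcal{A}ug_+\}\;\hookrightarrow\;\{\text{iso classes of microlocal rank 1 sheaves}\}/\text{local systems}.
\]
This map is injective and compatible with the natural functor relating the two categories.

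The final step, and the main obstacle, is to exhibit a link $L$ and a field $k$ of characteristic $2$ for which this map fails to be surjective. \cite{Gao} produces such non-surjectivity examples. Sheaves not in the image come from representations of the link group that do not satisfy the augmentation conditions, for instance ones in which a meridian acts in a way not realized by any augmentation. I will first try to reuse Gao's example directly and verify that the relevant sheaf is defined over $\mathbb{F}_2$ or over a finite extension of $\mathbb{F}_2$. The point to check is that the obstruction to lifting it to an augmentation involves no division by $2$. If it does involve such a division, I would instead construct a multi-component example by hand: take a sheaf whose microlocal monodromy along a mixed meridian is incompatible with every dilation class of augmentations. I would then check via the cord/representation description that no augmentation maps to it. Once such a sheaf class lies outside the image, the sheaf category has isomorphism classes not accounted for by $\mathcal{A}ug_+$, which is the precise meaning of ``more'' in the statement. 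In particular, the injection above is not a bijection, so $\mathcal{A}ug_+$ and the microlocal rank $1$ sheaf category are not equivalent via this correspondence.
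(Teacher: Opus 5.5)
Your proposal is correct and follows the paper's argument. Knot contact homology has no degree $-1$ Reeb chords, so Corollary \ref{specialcasecor2} identifies isomorphism classes in $\mathcal{A}ug_+$ with augmentations modulo dilation, and the injective but not always surjective map of \cite{Gao} into microlocal rank 1 sheaves modulo local systems then gives the statement. The paper simply imports the non-surjectivity from \cite{Gao} and does not carry out the characteristic-2 check you flag. It also asserts no comparison functor, so your remark about compatibility with a ``natural functor'' is not needed.
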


We do not have a precise explanation for this phenomenon at this point, but we suspect it is because the Legendrian tori arising from the link conormal bundle are not horizontally self-displaceable  in $V =J^1(S^2)$.

\section*{Acknowledgment}
We thank Lenhard Ng and Robert Lipshitz for their helpful conversations. HG is partially supported by the National Key R\&D Program of China 2023YFA1010500. HL was supported by a Johnson Fellowship from the mathematics department of the University of Oregon.

\section{Proof of the main theorem}\label{proof of the main theorem}

We fix some notation. Let \(\Lambda\) be a compact Legendrian submanifold in \(V=M\times\R\), equipped with a Maslov potential for each component of \(\Lambda\), and let \(\mathcal{A}\) be the CE dga of \(\Lambda\). We write \(\{b_i\}\) for the Reeb chords of \(\Lambda\) that have degree \(-1\) in \(\mathcal{A}\). We write \(\{e_i\}\) for the Reeb chords of \(\Lambda\) that have degree \(0\) in \(\mathcal{A}\).

Since we are only concerned with quasi-isomorphisms in the \(A_\infty\)-category \(\mathcal{A}ug_+(V,\Lambda)\), it suffices to consider the cohomology category \(H^*\mathcal{A}ug_+(V,\Lambda)\). For this, we use the characterization of \(H^*\mathcal{A}ug_+(V,\Lambda)\) given by \cite{Liu25}*{Proposition 4.11}. This characterization of \(H^*\mathcal{A}ug_+(V,\Lambda)\) can be thought of as a chain level model for \(\mathcal{A}ug_+(V,\Lambda)\), where only \(m_1\) and \(m_2\) are defined. Moreover, the \(A_\infty\)-relations that only involve \(m_1\) and \(m_2\) are satisfied. In the rest of this paper, we use this chain level model.

In \(\mathcal{A}ug_+(V,\Lambda)\), the morphism space is the cohomology of a chain complex \(\Hom(\epsilon_1,\epsilon_2)\). As a vector space, \(\Hom(\epsilon_1,\epsilon_2)\) is the free vector space with a basis given by the Reeb chords from \(\Lambda\) to a small positive pushoff \(\Lambda^+\). To specify a small positive pushoff \(\Lambda^+\), we need to pick an \(L^2\)-small positive Morse function \(f:\Lambda\to\R\). We pick \(f\) so that it has a unique critical point of index 0 on each component of \(\Lambda\). There are two kinds of Reeb chords from \(\Lambda\) to \(\Lambda^+\): short Reeb chords, which are entirely contained in a small tubular neighborhood of \(\Lambda\), and long Reeb chords, which are not contained in a small tubular neighborhood of \(\Lambda\). The short Reeb chords are in canonical bijection with the critical points of \(f\). The long Reeb chords are in canonical bijection with self Reeb chords of \(\Lambda\). For Reeb chords \(b_i\) and \(e_i\) of \(\Lambda\), we write \(b_i^\vee\) and \(e_i^\vee\) for the corresponding generators in \(\Hom(\epsilon_1,\epsilon_2)\). For a Reeb chord, its degree in \(\Hom^0(\epsilon_1,\epsilon_2)\) is \(1\) more than its degree in \(\mathcal{A}\).

\(\Hom^0(\epsilon_1,\epsilon_2)\) is the free vector space spanned by \( \{ \min_1,\ldots,\min_n,b_1^\vee,\ldots,b_m^\vee\}\), where \(\min_i\) corresponds to the short Reeb chord induced from the minimum of \(f\) on the \(i\)-th component of \(\Lambda\). The cohomology class of the sum of minimums \([\sum_{i=1}^n\min_i] \in H^0\Hom(\epsilon,\epsilon)\) is the cohomological unit, given any augmentation \(\epsilon\).

We also need to fix some auxiliary data, including a base point for each component and capping paths for the ends of Reeb chords. We choose the base points to be the same as the minima of \(f\). We choose the capping paths for the ends of an index 0 short Reeb chord to be constant, and the capping paths for the ends of an index 1 short Reeb chord to be the gradient flow of \(f\). We choose all other capping paths arbitrarily.

For any augmentations \(\epsilon_1,\epsilon_2\), an element in \(\Hom^0(\epsilon_1,\epsilon_2)\) has the following form:
\[
\sum_{i=1}^{n}\alpha_i{\min}_{i}+\sum_{j=1}^{m}K_j b_j^\vee.
\]

We would like to understand (1) when  this element is a cocycle, and (2) the composition between such elements. For these computations, we use the technique of quantum flow trees, developed in \cite{EESa}*{Section 6}, \cite{EENS}*{Section 5}, and \cite{EL}*{Section 3.3.3}. For a summary of quantum flow trees in the case of a 2-copy Legendrian, see \cite{Liu25}*{Section 2.2}.

\smallskip

(1) \textbf{Cocycle condition.}

We first compute $m_1(\sum_{i=1}^{n}\alpha_i\min_i)$. By \cite[Lemma 2.11]{Liu25}, a rigid holomorphic disk with a negative puncture at a minimum is either a Morse flow line from an index 1 critical point of \(f\) or a quantum flow tree with a constant disk.

For each index 1 critical point \(p\) of \(f\), there are 2 rigid Morse flow lines flowing toward the unique minimum $\min_i$, which lie on the same component as $p$. By our choice of basepoints and capping paths, these 2 rigid Morse flow lines contribute a total of \((1+\epsilon_1(\gamma_p)\epsilon_2(\gamma_p)^{-1})p^{\vee}\) or \((1+\epsilon_1(\gamma_p)^{-1}\epsilon_2(\gamma_p))p^{\vee}\) to $m_1(\min_i)$, where \(\gamma_p\) is the loop formed by the 2 rigid flow lines.

By \cite[Lemma 2.11]{Liu25}, for each long Reeb chord \(e_j\) that has degree 0 in the dga (which has degree 1 in \(\Hom(\epsilon_1,\epsilon_2)\)), there are 2 rigid quantum flow trees with a positive puncture at \(e_j\) and a negative puncture at a minimum \(\min_i\). One of them has a constant disk at \(e_j\) and a flow line toward \(\min_{r(e_j)}\), while the other one has a constant disk at \(e_j\) and a flow line toward \(\min_{c(e_j)}\). Recall \(r(e_j)\) and \(c(e_j)\) are the link gradings of \(e_j\) --- a Reeb chord \(e_j\) goes from component \(\Lambda_{c(e_j)}\) to component \(\Lambda_{r(e_j)}\). Assuming that \(\epsilon_1\) and \(\epsilon_2\) agree on \(\bar{\pi}(\Lambda)\), these 2 rigid quantum flow trees contribute the term \(\alpha_{c(e_j)}\epsilon_1(e_j)e_j^\vee+\alpha_{r(e_j)}e_j^\vee\epsilon_2(e_j)\) as a summand to \(m_1\left(\sum_{i=1}^{n}\alpha_i\min_i\right)\). So, assuming that \(\epsilon_1\) and \(\epsilon_2\) agree on \(\bar{\pi}(\Lambda)\),
\begin{equation}\label{m1 of alphai mini}
m_1\left(\sum_{i=1}^{n}\alpha_i{\min}_i\right)=\sum_j (\alpha_{c(e_j)}\epsilon_1(e_j)+\alpha_{r(e_j)}\epsilon_2(e_j))e_j^\vee.
\end{equation}

\smallskip
Next, we compute \(m_1(b_j^\vee)\). Since \(b_j\) is a long Reeb chord, and the negative punctures of a holomorphic disk must be shorter than the positive puncture by energy reasons, critical points of \(f\) will not appear in the differential \(m_1 (b_j^\vee)\). Therefore, the quantum flow trees that appear in this computation only contain holomorphic disks with boundary on \(\Lambda\times\R\), without flow lines.

Therefore, all the summands of \(m_1(b_j^\vee)\) can be described as follows. Let \(e_i\) be a Reeb chord \(\Lambda\) with degree \(0\) in \(\mathcal{A}\), with \(\partial e_i=\dots +xb_jy+\dots\) where \(x,y\) are monomials in \(\mathcal{A}\). Then, the summand \(xb_jy\) of \(\partial e_i\) gives rise to the summand \(\epsilon_1(x)\epsilon_2(y)e_i^\vee\) of \(m_1(b_j^\vee)\). Note that \(m_1 (b_j^\vee)\) is calculated exactly as that in bilinearized contact cohomology \cite{BC}.

Having computed $m_1$, we discuss the cocycle condition, starting with the following useful necessary condition:
\begin{lem}\label{lemma homotopy class}
If \(\alpha_i\neq 0\) for all \(1\leq i\leq n\) and \( \sum_{i=1}^{n}\alpha_i\min_i+\sum_{j=1}^{m}K_j b_j^\vee\in \Hom^0(\epsilon_1,\epsilon_2)\) is a cocycle, then \(\epsilon_1\) and \(\epsilon_2\) agree on \(\bar{\pi}(\Lambda)\).
\end{lem}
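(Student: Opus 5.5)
We will read off the coefficients of $m_1$ at the short Reeb chords of $\Hom^1(\epsilon_1,\epsilon_2)$ that come from index $1$ critical points of $f$. Write $X=\sum_{i=1}^{n}\alpha_i\min_i+\sum_{j=1}^{m}K_j b_j^\vee$. The degree-one generators are the long chords $e_j^\vee$ and the short chords $p^\vee$, one for each index $1$ critical point $p$ of $f$. The claim is that the $p^\vee$-coefficient of $m_1(X)$ equals $\alpha_i(1+\epsilon_1(\gamma_p)\epsilon_2(\gamma_p)^{-1})$, or the variant with inverses, where $i$ is the component containing $p$. If so, the cocycle condition together with $\alpha_i\neq 0$ forces $\epsilon_1(\gamma_p)=\epsilon_2(\gamma_p)$ in characteristic $2$.

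\textbf{Contributions to $p^\vee$.} First I verify that only $\min_i$ contributes to this coefficient.

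For the summands $K_jb_j^\vee$, I use the energy argument already given in the text. Since $b_j$ is long, every negative puncture of a contributing disk is shorter than $b_j$, but it cannot be a short chord coming from a critical point of $f$. Hence $m_1(b_j^\vee)$ involves only the $e_i^\vee$, and the $b_j^\vee$ contribute nothing to $p^\vee$.

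For the summands $\alpha_k\min_k$, I invoke \cite[Lemma 2.11]{Liu25}. A rigid tree with negative puncture at $\min_k$ is either a Morse flow line from an index $1$ critical point or a quantum flow tree with a constant disk at a long chord. Only the flow lines have output at $p^\vee$, and those from $p$ end at $\min_i$ since $f$ has a unique minimum on each component. This gives exactly the two flow lines computed above, whose contribution is independent of any assumption on $\bar\pi(\Lambda)$.

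\textbf{Conclusion.} The coefficient of $p^\vee$ in $m_1(X)$ is therefore $\alpha_i(1+\epsilon_1(\gamma_p)\epsilon_2(\gamma_p)^{-1})$, up to the inverse variant. Vanishing of this coefficient, with $\alpha_i\neq0$, gives $\epsilon_1(\gamma_p)=\epsilon_2(\gamma_p)$. It remains to see that the loops $\gamma_p$ generate $\bar\pi(\Lambda)$. Because the base point is the minimum, the capping paths for $\min_i$ are constant, and those for $p$ follow the gradient flow, the union of unstable manifolds of index $\le 1$ critical points is a graph. This graph is the $1$-skeleton of the Morse CW structure and is based at $\min_i$, so the loops $\gamma_p$ for $p$ on $\Lambda_i$ generate $\pi_1(\Lambda_i,\min_i)$. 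Hence the two augmentations agree on a generating set of each $\pi_1(\Lambda_i)$, and therefore on the free product $\bar\pi(\Lambda)$.

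\textbf{Main obstacle.} I expect the hardest step to be confirming that the group-ring weights on the two flow lines are exactly $1$ and $\epsilon_1(\gamma_p)\epsilon_2(\gamma_p)^{\mp1}$, with the $\epsilon_1$ and $\epsilon_2$ factors attached to the correct sides of the two-copy boundary. This is the conjugation bookkeeping behind the formula stated before the lemma, which I take as given. If $\Lambda_i$ has generators of $\pi_1$ represented through the chosen base point with some twisting, I would pass to the identification of the Morse $1$-skeleton loops with the standard generating set to conclude.
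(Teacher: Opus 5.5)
Your proposal is correct and follows the paper's proof. You isolate the $p^\vee$-coefficient of $m_1$, note that only $\alpha_i\min_i$ contributes to it (and that the flow-line weights need no assumption on $\bar\pi(\Lambda)$), and then use that the loops $\gamma_p$ generate $\bar\pi(\Lambda)$; you justify this last step via the Morse $1$-skeleton, where the paper simply asserts it. One wording fix: $p^\vee$ is excluded from $m_1(b_j^\vee)$ because $p$ would have to be the \emph{positive} puncture of a disk with $b_j$ as a negative puncture, which is impossible since $E(p)<E(b_j)$ — the point is not that the negative punctures are shorter than $b_j$.
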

\begin{proof}

In the calculation of \(m_1 (\min_i) \), the coefficient of \(p^\vee\) vanishes if and only if \(\epsilon_1(\gamma_p)=\epsilon_2(\gamma_p)\). Since \(p^\vee\) only appears as a summand of the differential of one of the \(\min_i\) and none of the \(b_j^\vee\), we have that in 
\begin{equation}\label{Flow Coeff=0 cond}
    \text{Coeff}_{p^\vee}\left( m_1 \left(\sum_{i=1}^{n}\alpha_i{\min}_i+\sum_{j=1}^{m}K_j b_j^\vee\right)\right) =0 \;\Leftrightarrow \; \epsilon_1(\gamma_p)=\epsilon_2(\gamma_p)\text{, or }\alpha_i=0.
\end{equation}
Since the loops \(\{\gamma_p : \ind(p) =1\}\) generate \(\bar{\pi}(\Lambda)\), we conclude the assertion.
\end{proof}

We conclude the following characterization for being a cocycle.

\begin{lem}\label{lemma cocycle homotopy}
    Let \(\epsilon_1,\epsilon_2\) agree on \(\bar{\pi}(\Lambda)\). Let \(A:=\sum_{i=1}^{n}\alpha_i\min_i+\sum_{j=1}^{m}K_j b_j^\vee \in \Hom^0(\epsilon_1,\epsilon_2) \), with \(\alpha_i\neq0\) for all \(i\). View \(k\) as a dga concentrated in degree \(0\). Define a degree 1 map \(K:\mathcal{A}\to k\) on generators by \(K(b_j)=K_j,\) and on products of generators by the rule 
    \(
    K(xy)=K(x)\epsilon_2(y)+\epsilon_1(x)K(y).
    \)

    Then, \(A\) is a cocycle if and only if \(K\) is a dilated augmentation homotopy from \(\epsilon_1\) to \(\epsilon_2\).
\end{lem}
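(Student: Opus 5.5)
The plan is to compute $m_1(A)$ in full from the quantum flow tree descriptions above and compare it, coefficient by coefficient, with the conditions in Definition \ref{augmentation homotopy} for the choice $d_i:=\alpha_i$. These lie in $k^\times$ by hypothesis.

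\textbf{Step 1: the formula for $m_1(A)$.} Since $\epsilon_1$ and $\epsilon_2$ agree on $\bar{\pi}(\Lambda)$, the flow-line contributions $(1+\epsilon_1(\gamma_p)\epsilon_2(\gamma_p)^{-1})p^\vee$ vanish in characteristic $2$. So by \eqref{m1 of alphai mini} and the description of $m_1(b_j^\vee)$ we get
\[
m_1(A)=\sum_i\Bigl(\alpha_{c(e_i)}\epsilon_1(e_i)+\alpha_{r(e_i)}\epsilon_2(e_i)+\sum_{j}K_j\!\!\sum_{xb_jy\in\partial e_i}\!\!\epsilon_1(x)\epsilon_2(y)\Bigr)e_i^\vee .
\]
Hence $A$ is a cocycle if and only if, for every degree-$0$ chord $e_i$,
\[
\alpha_{c(e_i)}\epsilon_1(e_i)+\alpha_{r(e_i)}\epsilon_2(e_i)=\sum_j K_j\sum_{xb_jy\in\partial e_i}\epsilon_1(x)\epsilon_2(y).
\]

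\textbf{Step 2: identify the right-hand side with $K(\partial e_i)$.} First I will check that $K$ is well defined on the semi-free algebra, using the prescription $K=0$ on all generators other than the $b_j$. The twisted Leibniz rule extends uniquely from generators to monomials. It is compatible with the relations $gg^{-1}=1$ in the group ring, because $K(g)=K(g^{-1})=0$ and $K(1)=0$.

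Next, for a monomial $w$ of degree $-1$, expanding by the Leibniz rule writes $K(w)$ as a sum of terms. Each term applies $K$ to one letter and $\epsilon_1$ or $\epsilon_2$ to all the others. Both augmentations vanish on generators of nonzero degree, and $K$ vanishes off degree $-1$. So a term survives only if the letter hit by $K$ is some $b_j$ and every other letter has degree $0$. Consequently $K(xb_jy)=\epsilon_1(x)K_j\epsilon_2(y)$ for the monomials $xb_jy$ appearing in $\partial e_i$, and all other monomials of $\partial e_i$ contribute $0$. Summing over monomials shows the right-hand side above equals $K(\partial e_i)$.

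\textbf{Step 3: match with condition (2) for all generators.} For a degree-$0$ chord $x=e_i\in\mathcal{S}^{ij}$, the equation from Step 2 is exactly $d_i\epsilon_1(x)-d_j\epsilon_2(x)=K(\partial x)$ in characteristic $2$. This requires matching the link grading convention of \cite{Liu25}*{Definition 2.15} with $c(e_i)$ and $r(e_i)$; this bookkeeping is the one point I expect to need care, and if the roles are reversed one simply uses the same $d$ with $\epsilon_1,\epsilon_2$ placed accordingly. The remaining generators satisfy condition (2) automatically:
\begin{itemize}
\item A chord $x$ of nonzero degree has $\epsilon_1(x)=\epsilon_2(x)=0$. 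Also $\partial x$ has degree $\neq -1$, so $K(\partial x)=0$.
\item A group generator $g$ lies in $\mathcal{S}^{ii}$ with $\partial g=0$ and $\epsilon_1(g)=\epsilon_2(g)$, so both sides vanish.
\end{itemize}
Conditions (1) and (3) hold by construction of $K$. Hence $A$ is a cocycle if and only if $K$ is a dilated augmentation homotopy. For the converse direction, I will note that condition (2) forces the $d_i$ to satisfy the displayed equations, and I will take $d_i=\alpha_i$ as the dilation exhibited.
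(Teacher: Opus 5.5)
Your proposal is correct and follows the paper's proof: read off the $e_l^\vee$-coefficient of $m_1(A)$ as $\alpha_{c(e_l)}\epsilon_1(e_l)+\alpha_{r(e_l)}\epsilon_2(e_l)+K(\partial e_l)$ and recognize condition (2) in characteristic $2$ with $d_i=\alpha_i$. Your extra checks (well-definedness of $K$, and condition (2) on chords of nonzero degree and on group generators) are details the paper leaves implicit; your convention worry resolves by reading $\mathcal{S}^{ij}$ as chords from $\Lambda_i$ to $\Lambda_j$ ($c=i$, $r=j$), which is what the paper's definition of dilation presupposes, and the converse should be stated, as the paper uses it, with the dilation constants of $K$ assumed equal to the $\alpha_i$ rather than "taken" to be them.
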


\begin{proof}

    Let \(e_l\) be a Reeb chord of \(\Lambda\) with degree \(0\) in \(\mathcal{A}\). Expanding the terms, we see that 
    \[
    \Coeff_{e_l^\vee}\left(\sum_{j=1}^{m}K(b_j^\vee) b_j^\vee\right)=K(\partial e_l).
    \]

    We have that by \ref{m1 of alphai mini}, 
    \begin{align}\label{coeff of el in m1A}
    \begin{split}
        \Coeff_{e_l^\vee}\left( m_1 A \right)
        =&\alpha_{c(e_l)}\epsilon_1(e_l)+\alpha_{r(e_l)}\epsilon_2(e_l)+\Coeff_{e_l^\vee}\left(\sum_{j=1}^{m}K(b_j^\vee) b_j^\vee\right)\\
        =&\alpha_{c(e_l)}\epsilon_1(e_l)+\alpha_{r(e_l)}\epsilon_2(e_l)+K (\partial e_l).
    \end{split}
    \end{align}
    Since \(m_1 A\) is a linear combination of \(e_l^\vee\)'s, we have that \(m_1A=0\) if and only if for every \(e_l\), \eqref{coeff of el in m1A} equals 0. It is exact (2) in Definition \ref{augmentation homotopy} of  a dilated augmentation homotopy in characteristic 2. This finishes the proof.
\end{proof}

(2) \textbf{Product $m_2$.} Consider the following two elements:
\begin{equation}\label{def:AB}
    A := \sum_{i=1}^{n}\alpha_i{\min}_i+\sum_{j=1}^{m}K_j b_j^\vee\in\Hom^0(\epsilon_1,\epsilon_2),\quad 
    B:= \sum_{i=1}^{n}\alpha'_i\mathrm{min}'_i+\sum_{j=1}^{m}K'_j {b'_j}^\vee\in\Hom^0(\epsilon_2,\epsilon_3).
\end{equation}
We compute their $m_2$ product
\begin{equation*}
    m_2\left(A,B\right)\in\Hom^0(\epsilon_1,\epsilon_3),
\end{equation*}
using quantum flow trees. By \cite[Corollary 2.12]{Liu25}, we have that 
\begin{equation}\label{m2calc1}
    m_2\left({\min}_i,{\min}_j'\right)=\delta_{ij}{\min}_i''\in\Hom^0(\epsilon_1,\epsilon_3),
\end{equation}
and that
\begin{equation}\label{m2calc2}
m_2\left(\mathrm{min}_j,{b'_i}^\vee\right)=m_2\left(b_i^\vee,\mathrm{min}'_j\right)=\delta_{ij}b''_i\in\Hom^0(\epsilon_1,\epsilon_3),
\end{equation}
where $\delta_{ij}$ is the Kronecker delta,  $\min_i''$ and ${b_i''}^\vee$ represent the corresponding generators in $\Hom^0(\epsilon_1,\epsilon_3)$.

Denote by \(E(b)\) the energy of the Reeb chord \(b\), namely the length of the chord. Assume \(b_j\) are ordered with increasing energy, namely \(E(b_1)\leq \ldots \leq E(b_m)\). Also, note that \(E(b_j)>E(\min_i)\) for any \(i,j\). Since in a holomorphic disk, the total energy of the negative punctures must be smaller than the energy of the positive puncture, we have that
\begin{equation}\label{m2calc3}
    m_2(b_i^\vee,{b'_j}^\vee)\in \mathrm{Span}_k\left\{ {b''_{\max(i,j)+1}}^\vee,\ldots,{b_m''}^\vee\right\}\subseteq\Hom^0(\epsilon_1,\epsilon_3).
\end{equation}

\begin{proof}[Proof of Theorem \ref{mainthm}]
By definition, \(\epsilon_1\) is quasi-isomorphic to \(\epsilon_2\) in \(\mathcal{A}ug_+(V,\Lambda)\), if and only if \(\epsilon_1\) is isomorphic to \(\epsilon_2\) in \(H^0\mathcal{A}ug_+(V,\Lambda)\).

(\(\Rightarrow\)) Assume that \(\epsilon_1,\epsilon_2\) are quasi-isomorphic. By definition, there exist elements $A$ and $B$ in the form of \eqref{def:AB}, which are both cocycles, and their $m_2$ product satisfy 
\begin{equation*}
m_2\left(A,B\right)=\sum_{i=1}^n\mathrm{min}''_i\in\Hom^0(\epsilon_1,\epsilon_1).
\end{equation*} 
It follows from \eqref{m2calc1} -- \eqref{m2calc3} that all \(\alpha_i,\alpha'_i\) (the coefficients of \(\min_i,\min_i'\) in \(A,B\)) are non-zero. By Lemma \ref{lemma homotopy class}, \(\epsilon_1\) agrees with \(\epsilon_2\) on \(\bar{\pi}\). We can therefore apply Lemma \ref{lemma cocycle homotopy} to conclude that there is a dilated augmentation homotopy from \(\epsilon_1\) to \(\epsilon_2\).

(\(\Leftarrow\)) Assume that there is a dilated augmentation homotopy \(K\) from \(\epsilon_1\) to \(\epsilon_2\), giving the data \(d_i\) and \(K(b_j)\). We have that \(\epsilon_1\) and \(\epsilon_2\) take the same values on \(\bar{\pi}(\Lambda)\) by definition. 

We define an element $A\in \Hom^0(\epsilon_1,\epsilon_2)$ in the form of \eqref{def:AB}, where the coefficients are defined by $\alpha_i:=d_i$ and $K_j: = K(b_j)$. Lemma \ref{lemma cocycle homotopy} implies that \(A\) is a cocycle. 

We shall construct its right inverse in $B \in \Hom^0(\epsilon_2,\epsilon_1)$ in the form \eqref{def:AB}, such that 
\[
m_2\left(A,B\right)=\sum_{i=1}^n\mathrm{min}''_i\in\Hom^0(\epsilon_1,\epsilon_1).
\]
The left inverse can be constructed analogously. It is important to emphasize that $\alpha_i$ and $K_j$ are given by the hypothesis, while $\alpha_i'$ and $K_j'$ are indeterminates. 

First, one must to define \(\alpha'_i :=\alpha_i^{-1}\). We will define \(K'_j\) by induction. By \eqref{m2calc3}, we have
\[
\mathrm{Coeff}_{{b''_1}^\vee}\left(m_2\left(A,B\right)\right)=K_1+K'_1.
\]
We define \(K'_1 :=K_1\) so as to have \(\mathrm{Coeff}_{{b''_1}^\vee}=0\). By induction, assuming we have defined \(K'_1,\ldots,K'_r\), and that for any choices of \(K'_{r+1},\ldots,K'_m\) , we have for all \(1\leq k\leq r\),
\[
\mathrm{Coeff}_{{b''_k}^\vee}\left(m_2\left(A,B\right)\right)=0.
\]
Then, we have that
\[
\mathrm{Coeff}_{{b''_{r+1}}^\vee}\left(m_2\left(A,B\right)\right)=K_{r+1}'+K_{r+1}+F_r,
\]
where \(F_r\) depends only on \(K_1,\ldots,K_r\) and \(K'_1,\ldots,K'_r\). Hence, we define \(K'_{r+1}:=K_{r+1}+F_r\). Inductively, we have defined all \(K_1',\ldots,K_m'\). 

Now we have defined $B$, it is straightforward to verify that $m_2\left(A,B\right)=\sum_{i=1}^n\mathrm{min}''_i\in\Hom^0(\epsilon_1,\epsilon_1)$. It remains to show that \(B\) is a cocycle. The \(A_\infty\) relations yield:
\[
m_1(m_2(A,B)) = m_2(m_1(A),B) + m_2(A,m_1(B)).
\]
Since \(m_2(A,B)\) is a unital element, the left hand side is 0. Since \(K\) is a dga homotopy, by Lemma \ref{lemma cocycle homotopy}, \(m_2(m_1(A),B) =0\). Hence, we have $m_2(A,m_1(B)) = 0$. Because $A$ is invertible under $m_2$, we conclude $m_1(B)= 0$.
\end{proof}

\begin{proof}[Proof of Corollary \ref{dga homotopy plus dilation}]
    A dga homotopy from \(\epsilon_1\) to \(\epsilon_2\) corresponds to an isomorphism of the form 
    \[
    \sum_{i=1}^{n}{\min}_i+\sum_{j=1}^{m}K_j b_j^\vee\in\Hom^0(\epsilon_1,\epsilon_2),
    \]
    and a dilation from \(\epsilon_2\) to \(\epsilon_3\) corresponds to an isomorphism of the form 
    \[
    \sum_{i=1}^{n}\alpha_i{\min}_i\in\Hom^0(\epsilon_2,\epsilon_3).
    \]
    Composing them results in an isomorphism, which by Theorem \ref{mainthm}, corresponds to a dilated augmentation homotopy. For reference, the dilated augmentation homotopy is
    \[    \sum_{i=1}^{n}\alpha_i{\min}_i+\sum_{j=1}^{m}\alpha_{c(b_j)}K_jb_j^\vee\in\Hom^0(\epsilon_1,\epsilon_3).
    \]
    Similarly, by composing an isomorphism corresponding to a dilated augmentation homotopy with an isomorphism corresponding to a dilation, we can obtain a dga homotopy. So, any two augmentations that are related by a dilated augmentation homotopy, are alternatively related by the composition of a dga homotopy and a dilation.
\end{proof}

\end{document}